\numberwithin{equation}{section}
\theoremstyle{plain}
\newtheorem{theorem}{Theorem}[section]
\newtheorem{lemma}[theorem]{Lemma}
\newtheorem{proposition}[theorem]{Proposition}
\newtheorem{corollary}[theorem]{Corollary}
\theoremstyle{definition}
\newtheorem{definition}[theorem]{Definition}
\begin{document}


\title[The finite representation property]{The finite representation property for some reducts of relation algebras}



\author[D. Rogozin]{Daniel Rogozin}
\address{Institute for Information Transmission Problems, Russian Academy of Sciences}
\email{daniel.rogozin@serokell.io}

\thanks{The author is sincerely grateful to Robin Hirsch, Ian Hodkinson, Stepan Kuznetsov, Valentin Shehtman, Eugeny Zolin, and his supervisor Ilya Shapirovsky for valuable conversations.\\The previous version contained an error. Thanks to Jas Semrl for noticing that.}


\subjclass{03G27, 03G15, 06B15}

\keywords{Relation algebras, The finite representation property, Residuated semigroups, Upper semilattice-ordered semigroups}

\begin{abstract}
In this paper, we show that the class of representable residuated semigroups has the finite representation property. That is, every finite representable residuated semigroup is isomorphic to some algebra over a finite base. This result gives a positive solution to Problem 19.17 from the monograph by Hirsch and Hodkinson \cite{hirsch2002relation}.
\end{abstract}

\maketitle

\section{Introduction}

Relation algebras are the kind of Boolean algebras with operators representing algebras of binary relations \cite{jonsson1951boolean}. One often emphasise the following two classes of relation algebras.
The first class called ${\bf RA}$ consists of algebras the signature of which is $\{ 0, 1, +, -, ;, {}^{\smile}, {\bf 1}' \}$ obeying the certain axioms that we define precisely below.
The second class called ${\bf RRA}$, the class of representable relation algebras, consists of algebras isomorphic to set relation algebras. ${\bf RRA}$ is a subclass of ${\bf RA}$, but the converse inclusion does not hold. That is, there exist relation algebras having no representation as set relation algebras \cite{lyndon1950representation}. Moreover, the class ${\bf RRA}$ is not finitely axiomatisable in contrast to ${\bf RA}$ \cite{monk1964representable}. The problem of determining whether a given finite relation algebra is representable is undecidable, see \cite{hirsch2001representability}.

Under these circumstances, one may conclude that relation algebras are quite badly behaved. The study of such reducts is mostly motivated by such ``bad behaviour'' in order to avoid these restrictions and determine the possible reasons for them.

There are several results on reducts of relation algebras having no finite axiomatisation such as ordered monoids \cite{hirsch2005class}, distributive residuated lattices \cite{andreka1994lambek}, join semilattice-ordered semigroups \cite{andreka2011axiomatizability}, algebras whose signature contains composition, meet, and converse \cite{hodkinson2000axiomatizability}, etc.

On the other hand, such classes as representable residuated semigroups and monoids \cite{andreka1994lambek}, and ordered domain algebras \cite{hirsch2013ordered} are finitely axiomatisable. There are also plenty of subsignatures for which the question of finite axiomatisability remains open, see, e. g., \cite{andreka2011axiomatizability}.

The other direction we emphasise is related to finite representability. A finite algebra of relations has the finite representation property if it is isomorphic to some algebra of relations on a finite base. The investigation is of interest to study such aspects as the decidability of membership of ${\bf R}(\tau)$. The finite representation property also implies recursivity of the class of all finite representable $\tau$-structures \cite{hirsch2004finite}, if the whole class is finitely axiomatisable.  Here, $\tau$ is a subsignature of operations definable in $\{ 0, 1, +, -, ;, {}^{\smile}, {\bf 1 } \}$. The examples of the class having the finite representation property are some classes of algebras \cite{hirsch2004finite} \cite{hirsch2013ordered} \cite{mclean2016finite}, the subsignature of which contains the domain and range operators. The other kind of algebras of binary relations having the finite representation property is semigroups with so-called demonic refinement has been recently studied by Hirsch and \v{S}emrl \cite{hirsch2020finite}.

Other algebras of relations such as weakly associative or relativised cylindric set algebras of a finite dimension have a similar property called the finite base property \cite{andreka1999finite}.

There are subsignatures of relation algebras $\tau$ such that
the class ${\bf R}(\tau)$ of representable reducts fails to have the finite representation property. For instance, $\{;, \cdot\}$, see \cite{maddux2016finite}. In general, (un)decidability of determining whether a finite relation algebra has a finite representation is an open question \cite[Problem 18.18]{hirsch2002relation}.

In this paper, we consider reducts of relation algebras the signature of which consists of composition, residuals, and the binary relation symbol denoting partial ordering, that is, the class of representable residuated semigroups. We show that ${\bf R}(;, \setminus, /, \leq)$ has the finite representation property. As result, Problem 19.17 of \cite{hirsch2002relation} has a positive solution. We also note that this result implies of membership decidability of ${\bf R}(;, \setminus, /, \leq)$ for finite structures and the class of finite representable residuated semigroups is recursive. The solution is surprisingly simple and based on the Dedekind-MacNeille completion and the relation representation of quantales. We consider a finite residuated semigroup and embed into a finite quantale mapping every element to its lower cone. After that, we apply the relational representation for quantales. As a result, the original finite residuated semigroup has a Zaretski-style representation \cite{zaretskii1959representation} and this satisfies the finite base requirement.

Finally, we show that  ${\bf R}(;, +)$, the class of representable upper semilattice-ordered semigroups, has no finite representation property. We provide a $\{+, ;\}$-reduct of the point algebra the base of which is always infinite.

\section{Preliminaries}

Let us the basic definitions related to relation algebras \cite{hirsch2002relation}.
\begin{definition} A relation algebra is an algebra $\mathcal{R} = \langle R, 0, 1, +, -, ;, {}^{\smile}, {\bf 1 }\rangle$ such that $\langle R, 0, 1, +, - \rangle$ is a Boolean algebra and the following
    equations hold, for each $a, b, c \in R$:
    \begin{enumerate}
      \item $a ; (b ; c) = (a ; b) ; c$,
      \item $(a + b) ; c = (a ; c) + (b ; c)$,
      \item $a ; {\bf 1}' = a$,
      \item $a^{\smile \smile} = a$,
      \item $(a + b)^{\smile} = a^{\smile} + b^{\smile}$,
      \item $(a ; b)^{\smile} = b^{\smile} ; a^{\smile}$,
      \item $a^{\smile} ; (- (a ; b)) \leq - b$.
    \end{enumerate}
    Note that $a \leq b$ iff $a + b = b$ iff $a \cdot b = a$, where $a \cdot b = - (- a + - b)$. ${\bf RA}$ is the class of all relation algebras.
\end{definition}

\begin{definition}
    A proper relation algebra (or, a set relation algebra) is an algebra $\mathcal{R} = \langle R, 0, 1, \cup, -, ;, {}^{\smile}, {\bf 1 }\rangle$ such that $R \subseteq \mathcal{P}(W)$, where $X$ is a base set, $W \subseteq X \times X$ is an equivalence relation, $0 = \emptyset$, $1 = W$, $\cup$ and $-$ are set-theoretic union and complement respectively, $;$ is relation composition, ${}^{\smile}$ is relation converse,
    ${\bf 1}'$ is the identity relation restricted to $W$, that is:
    \begin{enumerate}
    \item $a ; b = \{ \langle x, z \rangle \in W \: | \: \exists y \: \langle x, y \rangle \in a \: \& \: \langle y, z \rangle \in b \}$
    \item $a^{\smile} = \{ \langle x, y \rangle \in W \: | \: \langle y, x \rangle \in a \}$
    \item ${\bf 1}' = \{ \langle x, y \rangle \in W \: | \: x = y \}$
    \end{enumerate}
       ${\bf PRA}$ is the class of all proper relation algebras. ${\bf RRA}$ is the class of all representable relation algebras, that is, the closure of ${\bf PRA}$ under isomorphic copies.
\end{definition}

We will use the following notation due to, for example, \cite{hirsch2011positive}. Let $\tau$ be a subset of operations and predicates definable in ${\bf RA}$. ${\bf R}(\tau)$ is the class of subalgebras of $\tau$-subreducts of algebras belonging to ${\bf RRA}$. We assume that ${\bf R}(\tau)$ is closed under isomorphic copies.

A $\tau$-structure is \emph{representable} if it is isomorphic to some algebra of relations of $\tau$-signature. A representable finite $\tau$-structure has a \emph{finite representation over a finite base} if it is isomoprhic to some finite representable over a finite base. ${\bf R}(\tau)$ has the finite representation property if every $\mathcal{A} \in {\bf R}(\tau)$ has a finite representation over a finite base.

One may express residuals in every $\mathcal{R} \in {\bf RA}$ as follows using Boolean negation, inversion, and composition as follows:

\begin{enumerate}
  \item $a \setminus b = -(a^{\smile} ; -b)$
  \item $a \: / \: b = - (- a ; b^{\smile})$
\end{enumerate}

These residuals have the following explicit definition in $\mathcal{R} \in {\bf PRA}$:
\begin{enumerate}
  \item $a \setminus b = \{ \langle x, y \rangle \: | \: \forall z \: (z, x) \in a \Rightarrow (z, y) \in b \}$
  \item $a \: / \: b = \{ \langle x, y \rangle \: | \: \forall z \: (y, z) \in b \Rightarrow (x, z) \in a \}$
\end{enumerate}

One may visualise residuals in ${\bf RRA}$ with the following triangles:

\xymatrix{
&& \exists y \ar@{-->}[ddr]^{b} &&& \forall z \ar@{-->}[ddl]_{a} \ar@{-->}[ddr]^{b} &&& \forall z \\
&&&&& \Rightarrow &&& \Leftarrow \\
& x \ar@{-->}[uur]^{a} \ar[rr]_{a;c} && z & x \ar[rr]_{a \setminus b} && y & x \ar@{-->}[uur]^{a} \ar[rr]_{a / b} && y \ar@{-->}[uul]_{b}
}

\section{The case of residuated semigroups}

The problem we are interested in is the following \cite[Problem 19.17]{hirsch2002relation}:

\begin{center}
  Does ${\bf R}(;, \setminus, /, \leq)$ have the finite representation property?
\end{center}

Let us introduce the notion of a residuated semigroup. Historically, residuated structures were introduced by Krull to study ideals of rings \cite{krull1968idealtheorie}. Structures of this kind has been considered further within semantic aspects of substructural logics, see, for example, \cite{jipsen2002survey}.

\begin{definition}
  A residuated semigroup is an algebra $\mathcal{A} = \langle A, ;, \leq, \setminus, / \rangle$ such that $\langle A, ;, \leq, \rangle$ is a partially ordered residuated semigroup and $\setminus, /$ are binary operations satisfying the residuation property:

  \begin{center}
    $b \leq a \setminus c \Leftrightarrow a ; b \leq c \Leftrightarrow a \leq c \: / \: b$
  \end{center}
  ${\bf RS}$ is the class of all residuated semigroups.
\end{definition}

The logic of such structures is the Lambek calculus \cite{lambek1958mathematics} allowing one to characterise inference in categorial grammars, the equivalent version of context-free grammars \cite{pentus1993lambek}. One may define the Lambek calculus as the following (cut-free) sequent calculus:

\begin{definition}
  $ $

  \begin{prooftree}
  \AxiomC{$ $}
  \RightLabel{\scriptsize{ax}}
  \UnaryInfC{$\varphi \vdash \varphi$}
  \end{prooftree}

\begin{minipage}{0.5\textwidth}
  \begin{flushleft}
        \begin{prooftree}
      \AxiomC{$\Gamma \vdash \varphi$}
      \AxiomC{$\Delta, \psi, \Theta \vdash \theta$}
      \RightLabel{$\backslash \vdash $}
      \BinaryInfC{$\Delta, \Gamma, \varphi \backslash \psi, \Theta \vdash \theta$}
    \end{prooftree}

    \begin{prooftree}
      \AxiomC{$\Gamma \vdash \varphi$}
      \AxiomC{$\Delta, \psi, \Theta \vdash \theta$}
      \RightLabel{$/ \vdash $}
      \BinaryInfC{$\Delta, \psi \: / \: \varphi, \Gamma, \Theta \vdash \theta$}
    \end{prooftree}

    \begin{prooftree}
      \AxiomC{$\Gamma, \varphi, \psi, \Delta \vdash \theta$}
      \RightLabel{$\bullet \vdash $}
      \UnaryInfC{$\Gamma, \varphi \bullet \psi, \Delta \vdash \theta$}
    \end{prooftree}
\end{flushleft}
\end{minipage}
\begin{minipage}{0.5\textwidth}
  \begin{flushright}
       \begin{prooftree}
      \AxiomC{$\varphi, \Pi \vdash \psi$}
      \RightLabel{$\vdash \backslash$}
      \UnaryInfC{$\Pi \vdash \varphi \backslash \psi$}
    \end{prooftree}

    \begin{prooftree}
      \AxiomC{$\Pi, \varphi \vdash \psi$}
      \RightLabel{$\vdash /$}
      \UnaryInfC{$\Pi \vdash \varphi \: / \: \psi$}
    \end{prooftree}

    \begin{prooftree}
      \AxiomC{$\Gamma \vdash \varphi$}
      \AxiomC{$\Delta \vdash \psi$}
      \RightLabel{$\vdash \bullet$}
      \BinaryInfC{$\Gamma, \Delta \vdash \varphi \bullet \psi$}
    \end{prooftree}
\end{flushright}
\end{minipage}
\end{definition}

The class ${\bf R}(;, \setminus, /, \leq)$ consists of the following structures:

\begin{definition} \label{rrs}
  Let $A$ be a set of binary relations on some base set $W$ such that $R = \cup A$ is transitive and $W$ is a domain of $R$. A relation residuated semigroup is an algebra $\mathcal{A} = \langle A, ;, \setminus, /, \subseteq \rangle$ where for each $a, b \in A$
  \begin{enumerate}
    \item $a ; b = \{ (x, z) \: | \: \exists y \in W \: ((x, y) \in a \: \& \: (y, z) \in b) \}$,
    \item $a \setminus b = \{ (x, y) \: | \: \forall z \in W \: ((z, x) \in a \Rightarrow (z, y) \in b)\}$,
    \item $a \: / \: b = \{ (x, y) \: | \: \forall z \in W \: ((y, z) \in b \Rightarrow (x, z) \in a)\}$,
    \item $a \leq b$ iff $a \subseteq b$.
  \end{enumerate}
\end{definition}
A residuated semigroup is called \emph{representable} if it is isomorphic to some algebra belonging to ${\bf R}(;, \setminus, /, \leq)$.

\begin{definition}
  Let $\tau = \{ ;, \setminus, /, \leq \}$, $\mathcal{A}$ a $\tau$-structure, and $X$ a base set. An \emph{interpretation} $R$ over a base $X$ maps every $a \in \mathcal{A}$ to a binary relation $a^R \subseteq X \times X$. A \emph{representation} of $\mathcal{A}$ is an interpretation $R$ satisfying the following conditions:

  \begin{enumerate}
    \item $a \leq b$ iff $a^R \subseteq b^R$,
    \item $(a;b)^R = \{ (x, y) \: | \: \exists z \in X \: (x, z) \in a^R \: \& \: (z, x) \in b^R \} = a^R ; b^R$,
    \item $(a \setminus b)^{R} = \{ (x, y) \: | \: \forall z \in X \: ((z, x) \in a^R \Rightarrow (z, y) \in b^R)\} = a^R \setminus b^R$,
    \item $(a \: / \: b)^{R} = \{ (x, y) \: | \: \forall z \in X \: ((y, z) \in a^R \Rightarrow (x, z) \in b^R)\} = a^R \: / \: b^R$.
  \end{enumerate}
\end{definition}

Andr\'{e}ka and Mikul\'{a}s proved the representation theorem for ${\bf RS}$ (\cite{andreka1994lambek}) in the fashion of step-by-step representation. See this paper to learn more about step-by-step representations in general \cite{hirsch1997step}. That also implies relational completeness of the Lambek calculus, the logic of ${\bf RS}$.

This fact also claims that the theory of ${\bf R}(;, \setminus, /, \leq)$ is finitely axiomatisable since their theories coincide, and the class of all residuated semigroup is finitely axiomatisable.

One may rephrase the result of the theorem by Andr\'{e}ka and Mikul\'{a}s as $\mathcal{A}$ is representable iff $\mathcal{A}$ is a residuated semigroup, where $\mathcal{A}$ is a structure of the signature $\{ ;, \setminus, /, \leq \}$. Thus, it is sufficient to show that any finite residuated semigroup has a representation over a finite base in order to show that ${\bf R}(;, \setminus, /, \leq)$ has the finite representation property.

Let us start with preliminary order-theoretic definitions. Recall that a \emph{closure operator} on a poset $\langle P, \leq \rangle$ is a monotone map $j : P \to P$ satisfying $a \leq j a = j j a$ for each $a \in P$. Given $a \in P$, a \emph{lower cone} generated by $a$ is a subset $\downarrow a = \{ x \in P \: | \: x \leq a \}$.

A quantale is a complete lattice-ordered semigroup. Quantales has been initially introduced by Mulvey to provide a noncommutative generalisation of locales, study the spectra of $C^{*}$-algebras, and classify Penrose tilings, see \cite{mulvey1986suppl} \cite{mulvey2005noncommutative}.

\begin{definition}
  A \emph{quantale} is a structure $\mathcal{Q} = \langle Q, ;, \Sigma \rangle$ such that $\mathcal{Q} = \langle Q, \Sigma \rangle$ is a complete lattice, $\langle Q, ; \rangle$ is a semigroup, and the following conditions hold for each $a \in Q$ and $A \subseteq Q$:
  \begin{enumerate}
    \item $a \: ; \: \Sigma A = \Sigma \{ a ; q \: | \: q \in A \}$,
    \item $\Sigma A \: ; \: a = \Sigma \{ q ; a \: | \: q \in A \}$.
  \end{enumerate}
\end{definition}

Note that any quantale is a residuated semigroup as well. Given a quantale $\mathcal{Q} = \langle Q, ;, \Sigma \rangle$, One may express residuals uniquely with supremum and product as follows for each $a, b \in Q$:
\begin{enumerate}
  \item $a \setminus b = \Sigma \{ c \in Q \: | \: a ; c \leq b \}$,
  \item $a \: / \: b = \Sigma \{ c \in Q\: | \: b ; c \leq a \}$.
\end{enumerate}

A quantic nucleus is a closure operator on a quantale allowing one to define subalgebras. Such an operator is a generalisation of a well-known nucleus operator in locale theory and pointfree topology, see, e. g., \cite{bezhanishvili2016locales}. The following definition and the proposition below are due to \cite[Theorem 3.1.1]{rosenthal1990quantales}.
\begin{definition}
  A \emph{quantic nucleus} on a quantale $\langle A, ;, \Sigma \rangle$ is a mapping $j : A \to A$ such that $j$ a closure operator satisfying $j a ; j b \leq j (a ; b)$.
\end{definition}

\begin{proposition} \label{subsemi}
  Let $\mathcal{A} = \langle A, ;, \Sigma \rangle$ be a quantale and $j$ a quantic nucleus, then
  $\mathcal{A}_j = \{ a \in A \: | \: j a = a \}$ forms a quantale, where $a ;_j b = j(a ; b)$ and $\Sigma_j A = j (\Sigma A)$ for each $a, b \in {A}_j$ and $A \subseteq \mathcal{A}_j$.
\end{proposition}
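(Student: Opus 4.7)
The plan is to extract first an absorption lemma that encapsulates the interaction of $j$ with composition, namely that for all $a,b \in A$
\[
j(ja \: ; \: b) \;=\; j(a \: ; \: b) \;=\; j(a \: ; \: jb).
\]
One inclusion in each equation follows from monotonicity of $j$ and of $;$ together with $a \le ja$; the other follows from the nucleus inequality $ja \: ; \: jb \le j(a ; b)$ combined with the idempotence $jja = ja$. From this lemma essentially everything else falls out with one-line computations.

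Next I would establish the lattice part. This is the standard fact that the fixed points of a closure operator on a complete lattice form a complete lattice in which meets agree with those of the ambient lattice and arbitrary joins are $\Sigma_j X := j(\Sigma X)$; well-definedness is automatic since $j$ is idempotent, so $j(\Sigma X)$ is a $j$-fixpoint. I would just cite (or sketch) this closure-operator fact.

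Then I would verify that $(A_j, ;_j)$ is a semigroup. For $a,b,c \in A_j$, associativity reads
\[
(a ;_j b) ;_j c \;=\; j\!\bigl(j(a;b) ; c\bigr) \;=\; j\bigl((a;b);c\bigr) \;=\; j\bigl(a;(b;c)\bigr) \;=\; j\bigl(a ; j(b;c)\bigr) \;=\; a ;_j (b ;_j c),
\]
where the middle equality uses associativity of $;$ in $\mathcal{A}$ and the outer two equalities are instances of the absorption lemma. Closure under $;_j$ is immediate since $j(a;b)$ is a fixpoint of $j$.

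Finally, for the quantale distributive laws, fix $a \in A_j$ and $X \subseteq A_j$. Using the absorption lemma and the distributivity in $\mathcal{A}$ I compute
\[
a ;_j \Sigma_j X \;=\; j\bigl(a ; j(\Sigma X)\bigr) \;=\; j(a ; \Sigma X) \;=\; j\bigl(\Sigma\{a;x : x \in X\}\bigr).
\]
To identify this with $\Sigma_j\{a;_j x : x \in X\} = j\bigl(\Sigma\{j(a;x) : x \in X\}\bigr)$ one sandwiches: $\Sigma\{a;x\} \le \Sigma\{j(a;x)\}$ gives one direction after applying $j$, and each $j(a;x) \le j(\Sigma\{a;x\})$ gives $\Sigma\{j(a;x)\} \le j(\Sigma\{a;x\})$, so applying $j$ and using idempotence yields the other direction. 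The right-hand distributive law is symmetric. I expect the main technical obstacle to be exactly this bookkeeping around the distributive law, where one has to insert and remove $j$'s without losing information; everything rests on the absorption lemma, which is therefore the single computation worth isolating cleanly.
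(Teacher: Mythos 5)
Your proof is correct. Note, however, that the paper gives no proof of this proposition at all: it is stated as a known result and attributed to Rosenthal (Theorem 3.1.1 of \emph{Quantales and their applications}), so there is nothing internal to compare against. Your argument --- isolating the absorption identity $j(ja\,;\,b)=j(a\,;\,b)=j(a\,;\,jb)$, invoking the standard fact that fixpoints of a closure operator on a complete lattice form a complete lattice with joins $j(\Sigma X)$, and then deriving associativity and the distributive laws by sandwiching with monotonicity and idempotence of $j$ --- is exactly the standard proof found in the cited source, and all the individual steps check out.
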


One may embed any residuated semigroup into some quantale with the Dedekind-MacNeille completion (see, for example, \cite{theunissen2007macneille}) as follows. According to Goldblatt \cite{goldblatt2006kripke}, residuated semigroups have the following representation based on quantic nuclei and the Galois connection.

We need this construction to solve the problem, let us discuss it briefly. See Goldblatt's paper to have a complete argument \cite{goldblatt2006kripke}.

Let $\mathcal{A} = \langle A, \leq, ;, \setminus, / \rangle$ be a residuated semigroup. Then $\langle \mathcal{P}(A), ;, \bigcup \rangle$ is a quantale, where the product operation on subsets is defined with the pairwise products of their elements.

Let $X \subseteq A$. We put $lX$ and $uX$ as the sets of lower and upper bounds of $X$ in $A$. We also put $m X = lu X$.
Note that the lower cone of an arbitrary $x$ is $m$-closed, that is,
$m (\downarrow x) = \downarrow x$.

$m : \mathcal{P}(A) \to \mathcal{P}(A)$ is a closure operator and the set

\begin{center}
$(\mathcal{P}(A))_m = \{ X \in \mathcal{P}(S) \: | \: m X = X\}$ )
\end{center}
forms a complete lattice with $\Sigma_{m} \mathcal{X} = m ( \bigcup \mathcal{X})$ and $\Pi_{m} = \bigcap \mathcal{X}$ \cite{davey2002introduction}.

The key observation is that $m$ is a quantic nucleus on $\mathcal{P}(A)$, that is, $m A ; m B \subseteq m (A ; B)$. We refer here to the Goldblatt's paper mentioned above.

Thus, according to Proposition~\ref{subsemi}, $\langle (\mathcal{P}(A))_m, \subseteq, ;_m \rangle$ is a quantale itself since $m$ is a quantic nucleus.

Let us define a mapping $f_m : \mathcal{A} \to (\mathcal{P}(A))_m$ such that $f_m : a \mapsto \downarrow a$. This map is well-defined since any lower cone generated by a point is $m$-closed. Moreover, $f_m$ preserves products, residuals, and existing suprema. In particular, $f_m$ is a residuated semigroup embedding.

As a result, we have the following representation theorem.

\begin{theorem} \label{orsRep}
  Every residuated semigroup has an isomorphic embedding to the subalgebra of some quantale.
\end{theorem}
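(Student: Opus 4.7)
The plan is to execute the construction sketched in the paragraphs preceding the statement: embed $\mathcal{A} = \langle A, \leq, ;, \setminus, /\rangle$ into the Dedekind-MacNeille style quantale $(\mathcal{P}(A))_m$ via the principal lower cone map $f_m : a \mapsto \downarrow a$, and then verify that this is a residuated semigroup embedding.

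First I would equip $\mathcal{P}(A)$ with the pointwise composition $X ; Y = \{x ; y \mid x \in X, y \in Y\}$ and set-theoretic union, obtaining the powerset quantale $\langle \mathcal{P}(A), ;, \bigcup\rangle$. Using the maps $l, u$ sending a subset of $A$ to its set of lower, respectively upper, bounds, I would define the closure operator $m = l \circ u$; standard order theory then yields that $(\mathcal{P}(A))_m$ is a complete lattice with infima given by intersection and suprema by $m$-closing the union, and that principal lower cones $\downarrow a$ are $m$-closed, so $f_m$ is at least well-defined into $(\mathcal{P}(A))_m$.

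The crucial, and only non-routine, step is to verify that $m$ is a quantic nucleus on $\mathcal{P}(A)$, i.e.\ $mX ; mY \subseteq m(X ; Y)$ for all $X, Y \subseteq A$. This is where residuation in $\mathcal{A}$ is genuinely used: taking $x \in mX$, $y \in mY$, and an upper bound $c$ of $X ; Y$, one transfers the bound across the product using $\setminus$ and $/$ to conclude $x ; y \leq c$, and hence $x ; y \in m(X ; Y)$. Once this is established, Proposition~\ref{subsemi} promotes $(\mathcal{P}(A))_m$ to a quantale with product $X ;_m Y = m(X ; Y)$; since every quantale is a residuated semigroup, $(\mathcal{P}(A))_m$ carries residuals $\setminus_m, /_m$ given by the usual supremum formulas.

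It then remains to check that $f_m$ is an embedding of $\tau$-structures. Preservation and reflection of order is immediate from $a \leq b \Leftrightarrow \downarrow a \subseteq \downarrow b$, which in particular gives injectivity. For composition one checks $\downarrow(a ; b) = m(\downarrow a ; \downarrow b)$: the inclusion $\supseteq$ uses monotonicity of $;$ in $\mathcal{A}$, while $\subseteq$ uses that $a ; b$ itself lies in $\downarrow a ; \downarrow b$. For the residuals, one combines the residuation law in $\mathcal{A}$ with the quantalic supremum description of $\setminus_m$ and $/_m$ to obtain $\downarrow(a \setminus b) = \downarrow a \setminus_m \downarrow b$, and symmetrically for $/$. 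The main obstacle is the quantic-nucleus inequality; after that every remaining step is a routine order-theoretic calculation.
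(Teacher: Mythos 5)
Your proposal follows exactly the route the paper takes: the powerset quantale $\langle \mathcal{P}(A), ;, \bigcup\rangle$, the Galois closure $m = l\circ u$, the quantic-nucleus inequality $mX ; mY \subseteq m(X;Y)$ feeding into Proposition~\ref{subsemi}, and the principal-lower-cone embedding $f_m : a \mapsto \downarrow a$. The paper itself only sketches this and defers the nucleus verification to Goldblatt, so your version is, if anything, slightly more explicit about where residuation is actually used; there is no substantive difference in approach.
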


In their turn, quantales have a relational representation. First of all, let us define a relational quantale.
The notion of a relational quantale was introduced by Brown and Gurr to represent quantales as algebras of relations and study relational semantics of the full Lambek calculus, the logic of bounded residuated lattices, see, e.g., \cite{brown1993representation}.
\begin{definition}
  Let $A$ be a non-empty set. A \emph{relational quantale} on $A$ is an algebra $\langle R, \subseteq, ; \rangle$, where
  \begin{enumerate}
    \item $R \subseteq \mathcal{P}(A \times A)$,
    \item $\langle R, \subseteq \rangle$ is a complete join-semilattice,
    \item $;$ is a relational composition that respects all suprema in both coordinates.
  \end{enumerate}
\end{definition}

The uniqueness of residuals in any quantale implies the following quite obvious fact.
\begin{proposition}\label{ok}
  Let $\mathcal{A}$ be a relational quantale over a base set $X$, then for each $a, b \in \mathcal{A}$
  \begin{enumerate}
    \item $a \setminus b = \{ (x, y) \in X^2 \: | \: \forall z \in X ( (z, x) \in a \Rightarrow (z, y) \in b) \}$,
    \item $a \: / \: b = \{ (x, y) \in X^2 \: | \: \forall z \in X ( (y, z) \in b \Rightarrow (x, z) \in b )\}$.
  \end{enumerate}
\end{proposition}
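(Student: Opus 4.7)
The plan is to exploit the uniqueness of residuals in any quantale. Writing $S_{\setminus}$ and $S_{/}$ for the set-theoretic expressions on the right-hand sides of (1) and (2), I would verify that these sets satisfy the defining Galois adjunction with composition, and then invoke the uniqueness of the residual in the quantale $\langle R,\subseteq,;\rangle$.

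The first step is a direct unfolding of definitions, relying only on the fact that composition in a relational quantale is ordinary relational composition of binary relations. For any $c \in R$, the forward implication $a;c \subseteq b \Rightarrow c \subseteq S_{\setminus}$ goes as follows: given $(x,y) \in c$ and any $z$ with $(z,x) \in a$, the pair $(z,y)$ lies in $a;c$ and hence in $b$, so $(x,y) \in S_{\setminus}$. Conversely, $c \subseteq S_{\setminus} \Rightarrow a;c \subseteq b$: any $(z,y) \in a;c$ is witnessed by some intermediate $x$ with $(z,x) \in a$ and $(x,y) \in c \subseteq S_{\setminus}$, which forces $(z,y) \in b$ by the definition of $S_{\setminus}$.

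Applying the forward direction to $c = a \setminus b$ yields $a \setminus b \subseteq S_{\setminus}$. For the reverse, the backward direction shows that $a; S_{\setminus} \subseteq b$ holds at the set-theoretic level; provided $S_{\setminus}$ itself belongs to $R$, the universal property then forces $S_{\setminus} \subseteq a \setminus b$, and the two agree as subsets of $X^2$. A mirror-image argument, with the triangle condition $(y,z) \in b \Rightarrow (x,z) \in a$ and composition on the right, handles $a / b$ symmetrically.

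The main subtle point is the implicit membership $S_{\setminus} \in R$ (and similarly $S_{/} \in R$), since in a relational quantale the suprema in $\langle R,\subseteq\rangle$ need not coincide with set-theoretic unions and so $R$ is not a priori closed under arbitrary set-theoretic operations. I would address this by invoking the quantale structure of $R$ itself: the abstract right residual $\Sigma\{c \in R : a;c \subseteq b\}$ exists as an element of $R$, and the adjunction established above pins it down uniquely as the set-theoretic expression $S_{\setminus}$. Once that identification is granted, uniqueness of residuals closes the argument immediately.
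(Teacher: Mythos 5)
Your strategy---verify that the set-theoretic expressions satisfy the Galois adjunction with composition and then appeal to uniqueness of residuals---is exactly the one the paper gestures at (its entire ``proof'' is the remark that ``the uniqueness of residuals in any quantale implies'' the proposition). Your first direction is sound: $a;(a\setminus b)\subseteq b$ holds in any quantale whose order is $\subseteq$, and your set-level adjunction then gives $a\setminus b\subseteq S_{\setminus}$. You also correctly isolate the crux that the paper silently skips: nothing guarantees $S_{\setminus}\in R$, and the supremum in $\langle R,\subseteq\rangle$ of $\{c\in R\,:\,a;c\subseteq b\}$ need not be the set-theoretic union of that family, let alone $S_{\setminus}$.

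However, your patch for that crux is circular. The adjunction you establish says only that $\{c\in R\,:\,a;c\subseteq b\}=\{c\in R\,:\,c\subseteq S_{\setminus}\}$; the residual $a\setminus b$ is the $R$-join of this family, i.e.\ the \emph{least element of $R$} above all of them, and there is no reason for that element to coincide with the largest set-theoretic solution $S_{\setminus}$. Uniqueness of residuals pins $a\setminus b$ down among elements of $R$, not among arbitrary subsets of $X^2$. In fact the reverse inclusion $S_{\setminus}\subseteq a\setminus b$ can fail under the stated hypotheses: on $X=\{0,1\}$ take $R=\{\emptyset,p,q\}$ with $p=\{(0,0)\}$ and $q=\{(0,0),(0,1)\}$. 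This family is closed under relational composition and forms a finite chain, hence a complete join-semilattice in which composition respects all suprema in both coordinates, so it is a relational quantale; yet $p\setminus p=\Sigma\{c\in R\,:\,p;c\subseteq p\}=p=\{(0,0)\}$, while the displayed formula gives $S_{\setminus}=\{(0,0),(1,0),(1,1)\}$. So the membership issue you flagged is not a technicality that uniqueness can absorb: as stated for arbitrary relational quantales the identity needs an extra hypothesis (e.g.\ that $R$ contains a greatest $c$ with $a;c\subseteq b$ and that this greatest solution is $S_{\setminus}$, or some property special to the Brown--Gurr quantales $\widehat{\mathcal{Q}}$ actually used later), and a correct proof must supply and exploit that hypothesis rather than derive the claim from the quantale axioms alone.
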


Let us discuss the relational representation of quantales. This construction is due to Brown and Gurr \cite{brown1993representation}.

Let $\mathcal{Q}$ be a quantale and $\mathcal{G}(\mathcal{Q})$ a set of its generators. We define:

\begin{center}
  $\hat{a} = \{ \langle g, q \rangle \: | \: g \in \mathcal{G}(\mathcal{Q}), q \in \mathcal{Q}, g \leq a ; q \} \:\:\:\: \widehat{\mathcal{Q}} = \{ \hat{a} \: | \: a \in \mathcal{Q} \}$
\end{center}

The mapping $a \mapsto \hat{a}$ satisfies the following conditions:

\begin{enumerate}
\item $a \leq b$ iff $\hat{a} \subseteq \hat{b}$,

\item $\widehat{\Sigma A} = \Sigma \widehat{A}$, $\hat{a} ; \hat{b} = \widehat{a ; b}$, and $\langle \widehat{\mathcal{Q}}, \subseteq, \Sigma \rangle$ is a complete lattice,

\item $\langle \widehat{\mathcal{Q}}, \subseteq, ; \rangle$ is a relational quantale,

\item $\mathcal{Q}$ is isomorphic to $\langle \widehat{\mathcal{Q}}, \subseteq, ; \rangle$,

\item $a \mapsto \hat{a}$ is a quantale isomorphism.
\end{enumerate}

\begin{theorem} \label{quantaleRep}
  Every quantale $\mathcal{Q} = \langle Q, ;, \Sigma \rangle$ is isomorphic to a relational quantale on $Q$ as a base set.
\end{theorem}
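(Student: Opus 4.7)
The argument is a direct application of the Brown and Gurr construction described just above the theorem. The plan is to take $\mathcal{G}(\mathcal{Q}) = Q$ itself as the generating set, so that each $\hat{a}$ is a subset of $Q \times Q$, and then to verify that $a \mapsto \hat{a}$ is a quantale isomorphism from $\mathcal{Q}$ onto $\widehat{\mathcal{Q}}$.

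First I would check the order-embedding property. The easy direction $a \leq b \Rightarrow \hat{a} \subseteq \hat{b}$ is immediate from monotonicity of $;$: if $g \leq a ; q$ then $g \leq b ; q$. For the converse direction, given $a \not\leq b$, one has to produce a pair $(g, q) \in \hat{a} \setminus \hat{b}$.

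Next I would verify multiplicativity $\hat{a} ; \hat{b} = \widehat{a ; b}$. The inclusion $\hat{a} ; \hat{b} \subseteq \widehat{a ; b}$ follows by associativity: if $g \leq a ; r$ and $r \leq b ; q$ then by monotonicity we get $g \leq a ; r \leq a ; (b ; q) = (a ; b) ; q$. For the reverse inclusion, given $(g, q) \in \widehat{a ; b}$, the element $r := b ; q \in Q$ serves as a witness, since $r \leq b ; q$ trivially and $g \leq (a ; b) ; q = a ; r$; this step relies on $r$ being allowed to range freely over $Q$, which is why taking $\mathcal{G}(\mathcal{Q}) = Q$ is convenient. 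The remaining conditions are the preservation of joins $\widehat{\Sigma A} = \Sigma \widehat{A}$, where the right-hand supremum is computed inside $\widehat{\mathcal{Q}}$ and need not coincide with set-theoretic union, and the distributivity of $;$ over these suprema. Both follow from the corresponding properties in $\mathcal{Q}$ via the order embedding: $\widehat{\Sigma A}$ is an upper bound of $\widehat{A}$ by monotonicity of $\widehat{\cdot}$, and if $\hat{b}$ is any other upper bound then $a \leq b$ for every $a \in A$ by order reflection, giving $\Sigma A \leq b$ and hence $\widehat{\Sigma A} \subseteq \hat{b}$. Together with multiplicativity this makes $\widehat{\mathcal{Q}}$ a relational quantale on base $Q$ and $a \mapsto \hat{a}$ a quantale isomorphism.

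The main obstacle I anticipate is the order-reflection step, since $\mathcal{Q}$ is only a semigroup and carries no multiplicative unit $e$ supplying the natural witness $(a, e)$ to separate $\hat{a}$ from $\hat{b}$ when $a \not\leq b$. One instead has to exploit the generating role of $Q$ itself, for instance via an element of the form $a \setminus a$ or the top $\Sigma Q$, or to adjoin a formal unit to $\mathcal{Q}$ before applying the construction and then restrict back.
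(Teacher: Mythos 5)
Your plan follows exactly the construction the paper takes from Brown and Gurr (the paper itself gives no proof beyond displaying the map $a \mapsto \hat{a}$ and listing its properties with a citation), and the parts you work out in detail are correct: the computation $\hat{a} ; \hat{b} = \widehat{a ; b}$ with the witness $r = b ; q$ is exactly right and is precisely the point where taking $\mathcal{G}(\mathcal{Q}) = Q$ is needed, and your join-preservation argument is sound once order reflection is available.

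The problem is the step you flag and leave open: order reflection. For a quantale that is merely a complete lattice-ordered \emph{semigroup}, this step does not just resist proof for the map as you have set it up --- it can genuinely fail. Take $Q$ to be any complete lattice with at least two elements and put $x ; y = \bot$ for all $x, y$, where $\bot = \Sigma\emptyset$; both distributivity laws hold, so this is a quantale, yet $\hat{a} = \{ (g,q) : g \le \bot \} = \{\bot\} \times Q$ for every $a$, so $a \mapsto \hat{a}$ is constant and no pair in $\hat{a} \setminus \hat{b}$ exists. This also rules out your first two candidate witnesses: here $a \setminus a = \Sigma Q$ and $a ; (a \setminus a) = a ; \Sigma Q = \bot$, so neither $a \setminus a$ nor $\Sigma Q$ supplies a $q$ with $a \le a ; q$. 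The only viable repair among those you list is the third: adjoin a formal unit $e$, check that $Q$ embeds into $Q[e]$ preserving the product and all suprema, represent $Q[e]$ using the witness $(a, e) \in \hat{a}$ to get order reflection, and restrict back to $Q$. That is essentially what the Brown--Gurr argument does; note that it changes the base set from $Q$ to $Q \cup \{e\}$, which is harmless for the paper's application to the finite representation property but means the theorem's phrase ``on $Q$ as a base set'' holds only up to one extra point. As written, your proof is incomplete at exactly the step that carries the real content.
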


We describe how we use Theorem~\ref{orsRep}, Proposition~\ref{ok}, and Theorem~\ref{quantaleRep} and constructions from their proofs to obtain an interpretation of the signature of residuated semigroups on ${\bf R}(;, \setminus, /, \leq)$.

Let $\mathcal{A}$ be a residuated semigroup and $\mathcal{Q}_{\mathcal{A}}$ is a quantale of Galois closed subsets of $\mathcal{A}$. $\widehat{\mathcal{Q}_{\mathcal{A}}}$ is the corresponding relational quantale. Let us define an interpretation $R : \mathcal{A} \to \widehat{\mathcal{Q}_{\mathcal{A}}}$ such that

\begin{center}
  $R : a \mapsto a^{R} = \widehat{\downarrow a}$
\end{center}

According to the lemma below, such an interpretation is a representation. As we have already said above, the mapping $a \mapsto \downarrow a$ is order-preserving. Moreover, this mapping commutes with products and residuals.

\begin{lemma}
  Let $\tau$ be a signature of residuated semigroups. An interpretation $R : \mathcal{A} \to \widehat{\mathcal{Q}_{\mathcal{A}}}$ such that $R : a \mapsto a^{R} = \widehat{\downarrow a}$ is a $\tau$-representation.
\end{lemma}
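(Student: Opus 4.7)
The strategy is to view $R$ as the composition of two structure-preserving maps already produced in the excerpt, and then to invoke Proposition~\ref{ok} to identify the abstract quantale residuals on the image with the set-theoretic residuals demanded by the representation definition.

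Concretely, write $R = h \circ f_m$, where $f_m : \mathcal{A} \to \mathcal{Q}_{\mathcal{A}}$ is the Dedekind-MacNeille map $a \mapsto \downarrow a$ and $h : \mathcal{Q}_{\mathcal{A}} \to \widehat{\mathcal{Q}_{\mathcal{A}}}$ is the Brown-Gurr map $q \mapsto \hat{q}$. The discussion culminating in Theorem~\ref{orsRep} asserts that $f_m$ is a residuated semigroup embedding, hence preserves $\leq$, $;$, $\setminus$, and $/$. Theorem~\ref{quantaleRep} asserts that $h$ is a quantale isomorphism, and since residuals in any quantale are uniquely determined from $;$ and arbitrary suprema, $h$ transports them as well. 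Thus $R$ is an embedding of residuated semigroups into $\widehat{\mathcal{Q}_{\mathcal{A}}}$ regarded as an \emph{abstract} residuated semigroup.

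It remains to match these abstract operations on $\widehat{\mathcal{Q}_{\mathcal{A}}}$ with the concrete set-theoretic ones demanded by Definition~\ref{rrs}. The order clause $a \leq b \Leftrightarrow a^R \subseteq b^R$ is immediate since $\widehat{\mathcal{Q}_{\mathcal{A}}}$ is ordered by inclusion. For $;$, the defining property of a relational quantale is that its semigroup operation is genuine relational composition, so $(a;b)^R = a^R ; b^R$ holds in the set-theoretic sense. For $\setminus$ and $/$ the matter is more delicate: the residuals transported via $h$ are defined abstractly as suprema of families satisfying a residuation inequality, and one must check that these agree with the element-wise universal formulas on the base. This is exactly the content of Proposition~\ref{ok}: in any relational quantale, the abstract residuals coincide with the explicit pointwise formulas.

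Combining the two steps yields all four clauses of the representation definition, so $R$ is a $\tau$-representation. The main technical point is the residual clause, where one has to be sure that residuals in the subquantale $\mathcal{Q}_{\mathcal{A}}$ of Galois-closed sets (and hence in $\widehat{\mathcal{Q}_{\mathcal{A}}}$ after transport through $h$) align with the pointwise relational residuals on the base $\mathcal{Q}_{\mathcal{A}}$; this is precisely the role of Proposition~\ref{ok} in the argument.
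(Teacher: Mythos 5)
Your proposal is correct and follows essentially the same route as the paper: the paper's own proof likewise factors $R$ through the Dedekind--MacNeille embedding $a \mapsto \downarrow a$ (Theorem~\ref{orsRep}) and the Brown--Gurr isomorphism $\downarrow a \mapsto \widehat{\downarrow a}$ (Theorem~\ref{quantaleRep}), and then appeals to Proposition~\ref{ok} to identify the residuals in $\widehat{\mathcal{Q}_{\mathcal{A}}}$ with the pointwise relational ones. Your version simply spells out the composition and the residual-matching step in more detail than the paper's very terse proof.
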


\begin{proof}
  By Theorem~\ref{orsRep}, $\mathcal{Q}_{\mathcal{A}}$ is isomorphic to $\widehat{\mathcal{Q}_{\mathcal{A}}}$. The isomorphism is established with the mapping $\downarrow{a} \mapsto \widehat{\downarrow a}$ according to Theorem~\ref{quantaleRep}.
  Residuals in $\widehat{\mathcal{Q}_{\mathcal{A}}}$ are well-defined by Proposition~\ref{ok}.
\end{proof}

Theorem~\ref{orsRep}, Theorem~\ref{quantaleRep}, and the lemma above imply the following statement.
\begin{corollary} \label{orsRep2}
  Every residuated semigroup is isomorphic to the subalgebra of some relational quantale.
\end{corollary}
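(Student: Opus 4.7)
The plan is to chain the two representation theorems just established. Starting from a residuated semigroup $\mathcal{A}$, Theorem~\ref{orsRep} (via the map $f_m : a \mapsto \downarrow a$) embeds $\mathcal{A}$ as a $\tau$-subalgebra (with $\tau = \{;, \setminus, /, \leq\}$) of the quantale $\mathcal{Q}_{\mathcal{A}} = (\mathcal{P}(A))_m$ of $m$-closed subsets; the preceding discussion already notes that $f_m$ preserves products, residuals, and the order.

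Next, I would apply Theorem~\ref{quantaleRep} to $\mathcal{Q}_{\mathcal{A}}$ to realise it, via the isomorphism $q \mapsto \hat q$, as the relational quantale $\widehat{\mathcal{Q}_{\mathcal{A}}}$ on the base $\mathcal{Q}_{\mathcal{A}}$. Composing the two maps gives precisely the interpretation $R : a \mapsto \widehat{\downarrow a}$ of the preceding lemma, which that lemma shows to be a $\tau$-representation. Consequently the image $R(\mathcal{A})$ is a $\tau$-subalgebra of the relational quantale $\widehat{\mathcal{Q}_{\mathcal{A}}}$, and $R$ witnesses the required isomorphism $\mathcal{A} \cong R(\mathcal{A})$.

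There is no substantive obstacle, since the corollary is essentially an assembly of the earlier results. The one point worth mentioning is that residuals taken inside the subalgebra $R(\mathcal{A})$ must agree with the residuals computed in the ambient relational quantale; this is automatic from Proposition~\ref{ok} together with the uniqueness of residuals relative to a given order and composition, so verifying preservation of $\setminus$ and $/$ at the level of $\mathcal{A}$ already secures their preservation in the relational image.
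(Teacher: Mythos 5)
Your proof is correct and follows essentially the same route as the paper: the corollary is obtained by composing the embedding $a \mapsto \downarrow a$ of Theorem~\ref{orsRep} with the isomorphism $q \mapsto \hat{q}$ of Theorem~\ref{quantaleRep}, and invoking the preceding lemma (together with Proposition~\ref{ok}) to confirm that the resulting map $a \mapsto \widehat{\downarrow a}$ is a $\tau$-representation. Your additional remark about residuals in the subalgebra agreeing with those of the ambient relational quantale is a worthwhile clarification, but it does not change the argument.
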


In particular, the representation we proposed implies the solution to \cite[Problem 19.17]{hirsch2002relation}.
\begin{theorem} \label{solution}
  ${\bf R}(;, \setminus, /, \leq)$ has the finite representation property.
\end{theorem}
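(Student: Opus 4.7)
The plan is to simply observe that the entire chain of constructions developed in this section preserves finiteness, so that feeding a finite residuated semigroup $\mathcal{A}$ into the composite $\mathcal{A} \hookrightarrow \mathcal{Q}_{\mathcal{A}} \cong \widehat{\mathcal{Q}_{\mathcal{A}}}$ produces a representation over a finite base. There is no essentially new argument to make; the work is in checking that each construction stays finite.

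First I would fix a finite residuated semigroup $\mathcal{A}$ and build the Dedekind-MacNeille quantale $\mathcal{Q}_{\mathcal{A}} = (\mathcal{P}(A))_m$ of $m$-closed subsets of $A$ from Theorem~\ref{orsRep}. Since $A$ is finite, $\mathcal{P}(A)$ has cardinality $2^{|A|}$, so $\mathcal{Q}_{\mathcal{A}}$ is finite (a fortiori it is a finite quantale). The embedding $f_m : \mathcal{A} \to \mathcal{Q}_{\mathcal{A}}$, $a \mapsto \downarrow a$, is a residuated semigroup embedding, as already established.

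Next I would apply the Brown-Gurr construction of Theorem~\ref{quantaleRep} to $\mathcal{Q}_{\mathcal{A}}$, taking the generating set $\mathcal{G}(\mathcal{Q}_{\mathcal{A}})$ to be $\mathcal{Q}_{\mathcal{A}}$ itself (any finite quantale is generated by its own carrier). This yields a relational quantale $\widehat{\mathcal{Q}_{\mathcal{A}}}$ whose base set is the finite set $\mathcal{Q}_{\mathcal{A}}$, so each $\widehat{b}$ is a binary relation on a finite set. By Proposition~\ref{ok}, residuals in $\widehat{\mathcal{Q}_{\mathcal{A}}}$ are computed by the set-theoretic formulas in the signature $\tau = \{;,\setminus,/,\leq\}$.

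Finally I would combine these with the lemma just proved: the interpretation $R : a \mapsto \widehat{\downarrow a}$ is a $\tau$-representation of $\mathcal{A}$, and its base is the finite set $\mathcal{Q}_{\mathcal{A}}$. Hence every finite member of ${\bf R}(;,\setminus,/,\leq)$ has a representation over a finite base, which is the finite representation property. I do not anticipate any real obstacle since the only thing to verify is finiteness of $\mathcal{Q}_{\mathcal{A}}$ and of the Brown-Gurr base, both of which follow immediately from finiteness of $A$; the representation-preserving properties of the constructions have all been handled in the preceding results.
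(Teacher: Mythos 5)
Your proof is correct and follows essentially the same route as the paper: embed the finite residuated semigroup $\mathcal{A}$ into the finite quantale $\mathcal{Q}_{\mathcal{A}}$ of Galois-closed subsets via $a \mapsto \downarrow a$, then apply the Brown--Gurr relational representation, whose base set is the finite set $\mathcal{Q}_{\mathcal{A}}$ itself. Your write-up is in fact a little more explicit than the paper's about why each stage preserves finiteness ($|\mathcal{Q}_{\mathcal{A}}| \leq 2^{|A|}$ and the base of $\widehat{\mathcal{Q}_{\mathcal{A}}}$ being $\mathcal{Q}_{\mathcal{A}}$), but the argument is the same.
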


\begin{proof}
  $ $

  Let $\mathcal{A}$ be a finite residuated semigroup.

  The representation of $\mathcal{A}$ as a subalgebra of a relational quantale clearly belongs to ${\bf R}(;, \setminus, /, \leq)$. This representation has the form

\begin{center}
  $\widehat{\mathcal{A}} = \langle \{ \widehat{\downarrow a} \}_{a \in \mathcal{A}}, ;, \setminus, /, \subseteq \rangle$.
\end{center}

  Moreover, such a representation with the corresponding relational quantale has the finite base, if the original algebra is finite. The base set of the quantale $\widehat{\mathcal{Q}_{\mathcal{A}}}$ is the set of Galois stable subsets of $\mathcal{A}$, the cardinality of which is finite.
\end{proof}

The main corollary of Theorem~\ref{solution} is that the Lambek calculus has the finite model property. Thus, we have a semantical proof of decidability of the Lambek calculus. Before that, there were several algebraic proofs that the Lambek caclulus has the FMP \cite{buszkowski2008infinitary}, but the authors used to consider arbitrary algebras, not representable ones.
Alternatively, one may show that the Lambek calculus is decidable syntactically, that is, via cut elimination and the subformula property \cite{lambek1958mathematics}.

\begin{corollary} \label{fmp}
  The Lambek calculus is complete w.r.t finite relational models (has the FMP).
\end{corollary}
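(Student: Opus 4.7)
The plan is to reduce finite relational completeness for the Lambek calculus to two ingredients: the finite representation property for $\mathbf{R}(;, \setminus, /, \leq)$ just established in Theorem~\ref{solution}, and the already-known algebraic finite model property for the Lambek calculus, for instance the one in \cite{buszkowski2008infinitary}. Soundness with respect to relational models is routine and can be read off directly from Definition~\ref{rrs}: each rule of the calculus (axiom, the two residuation rules, and the product rules) is validated once one interprets $;, \setminus, /$ as in any relation residuated semigroup, so derivable sequents hold in every finite relational model in particular.

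For the nontrivial direction, suppose $\Gamma \not\vdash \varphi$. First I would invoke the algebraic FMP for the Lambek calculus: there exist a finite residuated semigroup $\mathcal{A}$ and an assignment $v$ of elements of $\mathcal{A}$ to the propositional variables occurring in the sequent such that, under the induced translation, $v(\bullet\Gamma) \not\leq v(\varphi)$ in $\mathcal{A}$. Since the algebraic FMP for the Lambek calculus is documented independently of the present paper, this step imports an existing result and requires no new argument. Next I would apply Theorem~\ref{solution} to $\mathcal{A}$: this yields an isomorphic copy $\widehat{\mathcal{A}}$ of $\mathcal{A}$ as a concrete algebra of binary relations over the finite base of Galois stable subsets of $\mathcal{A}$, with the isomorphism preserving $;, \setminus, /$ and $\leq$. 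Composing $v$ with this isomorphism produces a finite relational model in which the same sequent fails, completing the proof.

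The proof is therefore essentially a two-line combination once the ingredients are in hand. The only point meriting care is the interface between the two: one must make sure that the algebraic FMP one imports refutes a sequent in an algebra belonging to the class to which Theorem~\ref{solution} applies, namely the variety of residuated semigroups in the signature $\{;, \setminus, /, \leq\}$, and not in a larger expansion (with a unit, additive connectives, or Boolean structure). With that signature-matching observed, no further obstacle arises: the novelty is entirely absorbed by Theorem~\ref{solution}, which is precisely the missing step that lets one promote a finite algebraic counter-model to a finite \emph{relational} one.
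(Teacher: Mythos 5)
Your proposal is correct and follows essentially the same route the paper intends: combine the previously known algebraic finite model property of the Lambek calculus (the cited result of Buszkowski et al.\ for finite residuated semigroups) with Theorem~\ref{solution} to transfer the finite algebraic countermodel to an isomorphic finite relational one. Your explicit attention to soundness and to matching the signature $\{;, \setminus, /, \leq\}$ when importing the algebraic FMP is a useful precision that the paper leaves implicit.
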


Moreover, finite axiomatisability and having the finite representation property of ${\bf R}(;, \setminus, /, \leq)$ imply that the membership of ${\bf R}(\tau)_{fin}$ is decidable.

\bibliographystyle{spmpsci}
\bibliography{Rogozin}

\begin{thebibliography}{10}
\providecommand{\url}[1]{{#1}}
\providecommand{\urlprefix}{URL }
\expandafter\ifx\csname urlstyle\endcsname\relax
  \providecommand{\doi}[1]{DOI~\discretionary{}{}{}#1}\else
  \providecommand{\doi}{DOI~\discretionary{}{}{}\begingroup
  \urlstyle{rm}\Url}\fi

\bibitem{andreka1999finite}
Andr{\'e}ka, H., Hodkinson, I., N{\'e}meti, I.: Finite algebras of relations
  are representable on finite sets.
\newblock The Journal of Symbolic Logic \textbf{64}(1), 243--267 (1999)

\bibitem{andreka1994lambek}
Andr{\'e}ka, H., Mikul{\'a}s, S.: Lambek calculus and its relational semantics:
  completeness and incompleteness.
\newblock Journal of Logic, Language and Information \textbf{3}(1), 1--37
  (1994)

\bibitem{andreka2011axiomatizability}
Andr{\'e}ka, H., Mikul{\'a}s, S.: Axiomatizability of positive algebras of
  binary relations.
\newblock Algebra universalis \textbf{66}(1-2), 7 (2011)

\bibitem{bezhanishvili2016locales}
Bezhanishvili, G., Holliday, W.H.: Locales, nuclei, and dragalin frames.
\newblock Advances in modal logic \textbf{11} (2016)

\bibitem{brown1993representation}
Brown, C., Gurr, D.: A representation theorem for quantales.
\newblock Journal of Pure and Applied Algebra \textbf{85}(1), 27--42 (1993)

\bibitem{buszkowski2008infinitary}
Buszkowski, W., Palka, E.: Infinitary action logic: Complexity, models and
  grammars.
\newblock Studia Logica \textbf{89}(1), 1--18 (2008)

\bibitem{davey2002introduction}
Davey, B.A., Priestley, H.A.: Introduction to lattices and order.
\newblock Cambridge university press (2002)

\bibitem{goldblatt2006kripke}
Goldblatt, R.: A kripke-joyal semantics for noncommutative logic in quantales.
\newblock Advances in modal logic \textbf{6}, 209--225 (2006)

\bibitem{hirsch2004finite}
Hirsch, R.: The finite representation property for reducts of relation algebra.
\newblock Manuscript, September  (2004)

\bibitem{hirsch2005class}
Hirsch, R.: The class of representable ordered monoids has a recursively
  enumerable, universal axiomatisation but it is not finitely axiomatisable.
\newblock Logic Journal of the IGPL \textbf{13}(2), 159--171 (2005)

\bibitem{hirsch1997step}
Hirsch, R., Hodkinson, I.: Step by step-building representations in algebraic
  logic.
\newblock Journal of Symbolic Logic pp. 225--279 (1997)

\bibitem{hirsch2001representability}
Hirsch, R., Hodkinson, I.: Representability is not decidable for finite
  relation algebras.
\newblock Transactions of the American Mathematical Society \textbf{353}(4),
  1403--1425 (2001)

\bibitem{hirsch2002relation}
Hirsch, R., Hodkinson, I.: Relation algebras by games.
\newblock Elsevier (2002)

\bibitem{hirsch2011positive}
Hirsch, R., Mikul{\'a}s, S.: Positive fragments of relevance logic and algebras
  of binary relations.
\newblock The Review of Symbolic Logic \textbf{4}(1), 81--105 (2011)

\bibitem{hirsch2013ordered}
Hirsch, R., Mikul{\'a}s, S.: Ordered domain algebras.
\newblock Journal of Applied Logic \textbf{11}(3), 266--271 (2013)

\bibitem{hirsch2020finite}
Hirsch, R., {\v{S}}emrl, J.: Finite representability of semigroups with demonic
  refinement.
\newblock arXiv preprint arXiv:2009.06970  (2020)

\bibitem{hodkinson2000axiomatizability}
Hodkinson, I., Mikul{\'a}s, S.: Axiomatizability of reducts of algebras of
  relations.
\newblock Algebra Universalis \textbf{43}(2-3), 127--156 (2000)

\bibitem{jipsen2002survey}
Jipsen, P., Tsinakis, C.: A survey of residuated lattices.
\newblock In: Ordered algebraic structures, pp. 19--56. Springer (2002)

\bibitem{jonsson1951boolean}
J{\"o}nsson, B., Tarski, A.: Boolean algebras with operators, i, ii.
\newblock American J. of Mathematics \textbf{73}, 891--939 (1951)

\bibitem{krull1968idealtheorie}
Krull, W.: Idealtheorie.
\newblock Springer (1968)

\bibitem{lambek1958mathematics}
Lambek, J.: The mathematics of sentence structure.
\newblock The American Mathematical Monthly \textbf{65}(3), 154--170 (1958)

\bibitem{lyndon1950representation}
Lyndon, R.C.: The representation of relational algebras.
\newblock Annals of mathematics pp. 707--729 (1950)

\bibitem{maddux2016finite}
Maddux, R.D.: The finite representation property fails for composition and
  intersection.
\newblock arXiv preprint arXiv:1604.01386  (2016)

\bibitem{mclean2016finite}
McLean, B., Mikul{\'a}s, S.: The finite representation property for
  composition, intersection, domain and range.
\newblock International Journal of Algebra and Computation \textbf{26}(06),
  1199--1215 (2016)

\bibitem{monk1964representable}
Monk, D., et~al.: On representable relation algebras.
\newblock The Michigan mathematical journal \textbf{11}(3), 207--210 (1964)

\bibitem{mulvey1986suppl}
Mulvey, C.J.: \&, suppl.
\newblock Rend. Circ. Mat. Palermo II \textbf{12}, 99--104 (1986)

\bibitem{mulvey2005noncommutative}
Mulvey, C.J., Resende, P.: A noncommutative theory of penrose tilings.
\newblock International Journal of Theoretical Physics \textbf{44}(6), 655--689
  (2005)

\bibitem{pentus1993lambek}
Pentus, M.: Lambek grammars are context free.
\newblock In: [1993] Proceedings Eighth Annual IEEE Symposium on Logic in
  Computer Science, pp. 429--433. IEEE (1993)

\bibitem{rosenthal1990quantales}
Rosenthal, K.I.: Quantales and their applications, vol. 234.
\newblock Longman Scientific and Technical (1990)

\bibitem{theunissen2007macneille}
Theunissen, M., Venema, Y.: Macneille completions of lattice expansions.
\newblock Algebra Universalis \textbf{57}(2), 143--193 (2007)

\bibitem{zaretskii1959representation}
Zaretskii, K.: The representation of ordered semigroups by binary relations.
\newblock Izvestiya Vysshikh Uchebnykh Zavedenii. Matematika (6), 48--50 (1959)

\end{thebibliography}

\end{document}